\date{}
\newtheorem{theorem}{Theorem}[section]
\newtheorem{lemma}[theorem]{Lemma}
\newtheorem{proposition}[theorem]{Proposition}
\theoremstyle{definition}
\newtheorem{definition}[theorem]{Definition}
\theoremstyle{remark}
\newcommand{\NN}{\ensuremath{\mathbb{N}}}
\newcommand{\RR}{\ensuremath{\mathbb{R}}}
\newcommand{\abs}[1]{\ensuremath{\lvert#1\rvert}}
\newcommand{\norm}[1]{\ensuremath{\lVert#1\rVert}}
\DeclareMathOperator{\card}{card}
\DeclareMathOperator{\conv}{conv}
\DeclareMathOperator{\Fix}{Fix}
\DeclareMathOperator{\spanned}{span}
\begin{document}

\title{The $C^1$ property of convex carrying simplices for three-dimensional competitive maps\footnote{This is an Accepted Manuscript of an article published by Taylor \& Francis in Journal of Difference Equations and Applications on 31/01/2018, available online: https://www.tandfonline.com/doi/full/10.1080/10236198.2018.1428964.}}

\author{
Janusz Mierczy\'nski \\
Faculty of Pure and Applied Mathematics \\
Wroc{\l}aw University of Science and Technology \\
Wybrze\.ze Wyspia\'nskiego 27 \\
PL-50-370 Wroc{\l}aw \\
Poland}

\maketitle

\begin{abstract}
It is proved that a convex carrying simplex for a three-dimensional competitive map is a $C^1$ submanifold-with-corners neatly embedded in the non-negative octant.
\end{abstract}

\noindent {\bf AMS Subject Classification.} 39A22; 37C65

\noindent {\bf Key Words.} Competitive map, carrying simplex, convexity, submanifold-with-corners.

\section{Introduction}
\label{sect:intro}
In his paper \cite{H-1988} M. W. Hirsch proved, for a wide class of strongly competitive systems of ordinary differential equations (ODEs), the existence of an unordered (with respect to the coordinate-wise ordering) invariant set, homeomorphic to the standard probability simplex, such that every non-trivial (that is, not equal identically to zero) orbit is attracted towards it.  M. L. Zeeman in~\cite{Z-1993} introduced the name `carrying simplex'.

As regards discrete-time competitive dynamical systems, the existence of carrying simplices was proved first for diffeomorphisms, see~\cite{Sm, W-J-2002}.  For competitive maps that are not necessarily invertible, see, e.g., 
\cite{H-2008, D-W-Y, RH, J-N-W, B-JDDE, J-N}.  One should also mention here earlier papers on so-called d-hypersurfaces, see \cite{Tak-1990, Tak-1992, W-J-2002}.

\medskip
To the best of our knowledge, it was first noticed in \cite{Z} that for a two-dimensional Lotka--Volterra system of ODEs possessing an equilibrium in the interior of the first quadrant the carrying simplex is strictly convex if and only if such an equilibrium (necessarily unique) is attracting.  Some errors in \cite{Z} were later fixed in~\cite{T, T2}, see also a completely independent paper~\cite{F-T-dS}.  In the case of Lotka--Volterra ODE systems of any dimension, \cite[Cor.~4.5]{Z-Z} states, for a system having a unique equilibrium in the interior of the non-negative orthant, that if the carrying simplex is convex then that equilibrium is globally asymptotically stable.  For other results on convexity, see~\cite{Z-Z-2002}.

The interest in investigating convexity and concavity of the carrying simplex has been rekindled in the series of papers \cite{B-Edinburgh, B} (for Lotka--Volterra systems of ODEs) and in~\cite{B-JDDE, B-H-2017} (for some discrete-time systems). In~particular, for two-dimensional Leslie--Gower models for which the origin is a repeller the global asymptotic stability of a (unique) fixed point in the interior of the first quadrant implies that the carrying simplex is convex, see~\cite[Sect.~5]{B-JDDE}.

\smallskip
In the light of the above, there is strong correlation between the convexity of the carrying simplex and the global asymptotic stability of the unique equilibrium/fixed point in the interior of the orthant, at~least for Lotka--Volterra/Leslie--Gower systems.

\medskip
Another feature of the carrying simplex is its smoothness. It was proved first for totally competitive systems of ODEs in~\cite{M-1994} that if a system is weakly persistent (meaning that no orbit is fully attracted towards the boundary of the orthant) then the carrying simplex is a neatly embedded $C^1$ submanifold-with-corners.  In~\cite{J-M-W} a characterization of the neat embedding for competitive maps was given in terms of inequalities between the Lyapunov exponents for ergodic invariant measures supported on the boundary of the carrying simplex.  For other results, see \cite{Brunovsky, Benaim, M-1999, M-1999a, M-1999b}.

\bigskip
As is shown in the present paper, for a wide class of three-dimensional competitive maps, not limited to Lotka--Volterra systems, the convexity of the carrying simplex alone entails that the simplex is a
$C^1$ submanifold-with-corners neatly embedded in the first octant.

The proofs use results from~\cite{J-M-W}.  As mentioned earlier, it is proved there that the neat embedding of the carrying simplex is equivalent to the fulfillment of some inequalities between Lyapunov exponents of ergodic invariant measures supported on the boundary.  In our case the ergodic invariant measures on the boundary of the carrying simplex are just Dirac measures on fixed points, and their Lyapunov exponents are the logarithms of (the moduli of) the eigenvalues of the Jacobian matrices at those boundary fixed points.  Even now it is not trivial to show that the convexity of the carrying simplex implies the appropriate inequalities.

\smallskip
The paper is organized as follows.  In Section~\ref{sect:preliminaries} the main notations are given, and the standing assumptions are formulated.  Section~\ref{sect:main} is devoted to the statement and proof of the main result (Theorem~\ref{thm:main}).  Section~\ref{sect:conclusions} contains concluding remarks.

\section{Preliminaries}
\label{sect:preliminaries}

In this paper we shall distinguish between points (elements of the {\em affine\/} space $H = \{\, x = (x_1, x_2, x_3): x_i \in \RR \,\}$) and vectors (elements of the {\em vector\/} space $V = \{\, v = (v_1, v_2, v_3): v_i \in \RR \,\}$).  $\norm{\cdot}$ stands for the Euclidean norm in $V$.

Denote by $C = \{\,x \in H: x_i \ge 0$ for all $i = 1, 2, 3 \,\}$ the three-dimensional non-negative {\em octant\/}.

\smallskip
Given $\emptyset \neq I \subset \{1,2,3\}$, we write $\overline{I} := \{1, 2, 3 \} \setminus I$.

Let $H_I := \{\,x\in H: x_j = 0$ for $j \in \overline{I} \,\}$.  For two points $x, y \in H_I$, we write $x \leq_I y$ if $x_i \leq y_i$ for all $i \in I$, and $x \ll_I y$ if $x_i < y_i$ for all $i \in I$.  If $x \leq_I y$ but $x \neq y$ we write $x <_I y$ (the subscript in $\leq$, $<$, $\ll$ is dropped if $I = \{1,2,3\}$).

The interior of $C$ is $C^{\circ} := \{\,x \in H : x \gg 0\,\}$ and the boundary of $C$ is $\partial C = C \setminus C^{\circ}$. We also let $H_I^+ := C \cap H_I$, $\dot{H}^+_I := \{\,x \in H_I^+ : x_i > 0$ for $i \in I \,\}$ and $\partial H_I^+$ be the relative boundary of $H_I^+$, $\partial H_I^+ = H_I^+ \setminus \dot{H}^+_I$.  $H^{+}_I$ is called a {\em $k$-dimensional face\/} of $C$, where $k = \card{I}$.

Let $V_I := \{\,v \in V: v_j = 0$ for $j \in \overline{I} \,\}$.  For two vectors $v, v \in V_I$, we write $v \leq_I w$ if $v_i \leq w_i$ for all $i \in I$, and $v \ll_I w$ if $v_i < w_i$ for all $i \in I$.  If $v \leq_I w$ but $v \neq w$ we write $v <_I w$ (the subscript in $\leq$, $<$, $\ll$ is dropped if $I = \{1,2,3\}$).

The standard non-negative {\em cone\/} $K$, with nonempty interior $K^{\circ}$, in $V$ is the set of all $v$ in $V$ such that $v_i \ge 0$ for all $i \in \{1, 2, 3\}$.  For $I \subset \{1, 2, 3\}$, we write $K_I := K \cap V_I$, $\dot{K}_I := \{\,v \in K_I: v_i > 0$ for $i \in I\,\}$, $\partial K_{I} := K_I \setminus \dot{K}_I$.

\medskip
Let $P \colon C \to P(C)$ be a $C^k$ diffeomorphism onto its image $P(C) \subset H$.  Recall that this means that there is an open $U \subset H$, $C \subset U$, and a $C^k$ diffeomorphism $\tilde{P} \colon U \to \tilde{P}(C)$ such that the restriction $\tilde{P}|_{C}$ of $\tilde{P}$ to $C$ equals $P$.

\smallskip
A set $A \subset C$ is {\em invariant\/} if $P(A) = A$.  For $x \in C$ the {\em $\omega$-limit set $\omega(x)$\/} of $x$ is the set of those $y \in C$ for which there exists a subsequence $n_k \to \infty$ such that $\norm{P^{n_k}x - y} \to 0$ as $k \to \infty$.  A compact invariant $\Gamma \subset C$ is the {\em global attractor\/} for $P$ if for each bounded $B \subset C$ and each $\epsilon > 0$ there exists $n_0 \in \NN$ such that $P^{n}(B)$ is contained in the $\epsilon$-neighbourhood of $\Gamma$
for $n \ge n_0$.

\smallskip
The symbol $\triangle$ stands for the {\em standard probability
$2$-simplex}, $\triangle : =\{\,x \in C: \sum_{i = 1}^3 x_i = 1\,\}$.  We
write $\triangle_I := \triangle \cap H_I^+$, $\dot{\triangle}_I :=
\triangle \cap \dot{H}_I^+$, $\partial\triangle_I := \triangle \cap
\partial H_I^+$.

\medskip
As in~\cite{J-M-W} we introduce the following assumptions.  We assume throughout the paper that they are satisfied.

\vskip 2mm
\noindent\textbf{(H1)} {\em $P$ is a $C^2$ diffeomorphism onto its
image $P(C)$.}

\vskip 2mm
\noindent \textbf{(H2)} {\em For each nonempty $I \subset \{1,2,3\}$, the sets $A = H^{+}_I$, $\dot{H}^+_I$ and $\partial H^+_I$ have the property that $P(A) \subset A$ and $P^{-1}(A) \subset A$.}

\vskip 2mm
\noindent \textbf{(H3$'$)} {\em For each nonempty $I \subset \{1,2,3\}$ and $x \in \dot{H}^+_I$, the $I \times I$  Jacobian matrix $D(P|_{H_I^+})(x)^{-1} = (DP(x)^{-1})_I = (DP^{-1}(Px))_I$ has all entries positive. Moreover, for any $v \in K_{\overline{I}} \setminus \{0\}$ there exists some $j \in I$ such that $(DP(x)^{-1}v)_j > 0$}

\vskip 2mm
\noindent \textbf{(H4$'$)} {\em For each $i \in \{1,2,3\}$, $P|_{H^+_{\{i\}}}$ has a unique fixed point $u_i > 0$ with $0 < (d/dx_{i})(P|_{H^{+}_{\{i\}}})(u_i) < 1$.  Moreover, $\dfrac{\partial P_i}{\partial x_j}(u_i) < 0$ $(j \ne i)$.}

\vskip 2mm
\noindent \textbf{(H5)} {\em If $x$ is a nontrivial $p$-periodic
point of $P$ and $I \subset \{1,2,3\}$ is such that $x \in \dot{H}^+_I$,
then $\mu_{I,p}(x) < 1$, where $\mu_{I,p}(x)$ is the
\textup{(}necessarily real\textup{)} eigenvalue of the mapping
$D(P|_{H^+ _I})^p(x)$ with the smallest modulus.}

\vskip 2mm
\noindent \textbf{(H6)}  {\em For each nonempty subset $I \subset \{1,2,3\}$
and $x, y \in \dot{H}^+_I$, if $0 \ll_{I} Px \ll_{I} Py$, then $\dfrac{P_{i}x}{P_{i}y} \ge \dfrac{x_i}{y_i}$ for all $i \in
I$ \textup{(}where $P = (P_1, P_2, P_3)$\textup{)}.}

\vskip 2mm
Maps satisfying (H1), (H2) and some weaker form of (H3$'$) (without the last sentence) are called in~\cite{Sm} {\em competitive maps\/}.

\begin{theorem}
\label{thm0}
There exists a compact invariant $S \subset C$ \textup{(}the {\em carrying simplex} for $P$\textup{)} having the following properties:
\begin{enumerate}
\item[{\rm (i)}]
$S$ is homeomorphic to the standard probability simplex $\triangle$ via radial projection $R$.
\item[{\rm (ii)}]
No two points in $S$ are related by the $\ll$ relation.  Moreover,
no two points in $S \cap C^{\circ}$ are related by the $<$ relation.
\item[{\rm (iii)}]
For any $x \in C \setminus \{0\}$ one has $\omega(x) \subset S$.
\item[{\rm (iv)}]
The global attractor $\Gamma$ equals $\{\, {\alpha} x : \alpha \in [0, 1], \ x \in S \,\}$.
\end{enumerate}
\end{theorem}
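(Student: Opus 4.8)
The plan is to construct $S$ from the backward dynamics, exploiting that (H3$'$) makes $P^{-1}$ order preserving on each face and that the origin is a repeller. Two structural facts come first. Since $P$ and $P^{-1}$ both preserve each coordinate axis by (H2), the Jacobian $DP(0)$ leaves each line $V_{\{i\}}$ invariant, hence is diagonal; by (H4$'$) the axial map $P|_{H^+_{\{i\}}}$ is increasing with attracting fixed point $u_i$ and $0$ as its other fixed point, so its derivative at $0$ exceeds $1$. Thus every eigenvalue of $DP(0)$ has modulus greater than $1$ and $0$ is a repeller for $P$ in $C$; equivalently $0$ is an attracting fixed point for $P^{-1}$. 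I would let $W := \{\, x \in C : P^{-n}x \to 0 \,\}$ denote its basin, a relatively open, invariant neighbourhood of $0$ in $C$. Trapping orbits between the origin and the attracting axial fixed points and using (H6) to control the interior, I would show $P$ is point dissipative, which produces the global attractor $\Gamma$ of item~(iv).

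The decisive step is monotonicity. Because $(DP^{-1})_I$ has positive entries, $P^{-1}$ is order preserving, so if $y \in W$ and $s \in [0,1]$ then $0 \le P^{-n}(sy) \le P^{-n}y \to 0$, whence $sy \in W$; that is, $W$ is radially star-shaped about $0$ and order-convex from below. Consequently, for each $x \in \triangle$ the slice $\{\, t \ge 0 : tx \in W \,\}$ is an interval $[0, \tau(x))$ with $0 < \tau(x) < \infty$, finiteness coming from boundedness of $W$ (itself a consequence of dissipativity together with the repelling origin). I would then set $S := \{\, \tau(x)\, x : x \in \triangle \,\}$, the relative boundary of $W$ in $C$. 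By construction $R|_S$ is a bijection onto $\triangle$; continuity of $\tau$ and compactness promote it to a homeomorphism, giving~(i), and invariance of $S$ is inherited from that of the open set $W$.

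For the nonordering property~(ii) I would use that $S$ bounds the order-convex set $W$: if $x, y \in S$ satisfied $x \ll y$, the order interval between them would force a point of the interior of $W$ onto $\partial W$, a contradiction. Strong competitivity---the positivity of $(DP^{-1})_I$ together with the off-face clause of (H3$'$)---upgrades $\le$-nonordering to $\ll$-nonordering on $S \cap C^{\circ}$, in the spirit of the Hirsch nonordering principle for competitive maps. Items~(iii) and~(iv) follow next: because $0$ is a repeller no nontrivial orbit converges to it, so $\omega(x) \subset \Gamma$ is a nonempty unordered invariant set; an orbit confined to neither $W$ nor the interior of its complement must accumulate on $\partial_C W = S$, giving~(iii), while the star-shapedness of $W$ identifies $\Gamma$ with the radial cone $\{\, \alpha x : \alpha \in [0,1],\ x \in S \,\}$ of~(iv).

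The main obstacle I anticipate is making $R|_S$ a genuine homeomorphism simultaneously with the global attraction statement: proving $\tau$ is finite and continuous, and that forward orbits accumulate exactly on $S$ rather than stalling nearby. This is precisely where (H5) is needed---its smallest-modulus eigenvalue condition excludes nontrivial recurrence off $S$---and where the sub-homogeneity estimate (H6) keeps the radial structure compatible with the forward dynamics. The interior nonordering argument, which must combine strong monotonicity of $P^{-1}$ with the off-face positivity clause of (H3$'$), is the other delicate point; the remaining verifications are a fairly standard application of the theory of competitive maps.
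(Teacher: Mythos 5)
Your overall strategy---realizing $S$ as the relative boundary of the basin $W$ of the repelling origin---is the classical construction going back to Hirsch, and it is essentially the route taken in the sources this result rests on; note, however, that the paper itself does not prove Theorem~\ref{thm0} at all: it cites \cite[Thm.~0]{J-M-W} for (i)--(iii) and \cite[Prop.~2.4]{J-M-W} for (iv), so the comparison is really between your sketch and that (long) prior paper. Measured against what such a proof requires, your proposal has concrete gaps. First, $P^{-1}$ is defined only on $P(C)$, and nothing in (H1)--(H6) guarantees that $P(C)$ contains a relative neighbourhood of $0$ in $C$ or that it is order-convex. Hence your set $W=\{\,x\in C: P^{-n}x\to 0\,\}$ presupposes that the full backward orbit of $x$ exists; the claimed openness of $W$, the monotonicity step $0\le P^{-n}(sy)\le P^{-n}y$ (which needs $P^{-n}(sy)$ to be defined at all), and the order-interval argument you use for (ii) all inherit this problem. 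The literature circumvents it by first constructing the global attractor $\Gamma$ and working inside $\Gamma$, where every point does have a full backward orbit. Second, your claim that the axial derivative of $P$ at $0$ exceeds $1$ does not follow from (H4$'$): an increasing $C^2$ map of the half-line can fix $0$ with derivative exactly $1$ there and still have $u_i$ as its unique positive fixed point with derivative in $(0,1)$ (e.g.\ $P(x)=x+\tfrac12 x^2(1-x)$ near $[0,1]$). Excluding derivative $=1$ requires (H6), which forces $P_i(x)/x_i$ to be non-increasing along the $i$-th axis; your sketch does not notice that this is where the strict inequality comes from.

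Third, and most seriously, the three statements carrying all the difficulty are deferred rather than proved: point dissipativity (existence of $\Gamma$), continuity of $\tau$ (equivalently, that $R|_S$ is a homeomorphism rather than a mere bijection), and the attraction statement (iii). For (iii) your argument is close to circular: orbits starting outside $\overline{W}$ satisfy $\omega(x)\subset\Gamma\setminus W$, and to identify $\Gamma\setminus W$ with $S$ you must show that the backward orbit of every point of $\Gamma$ strictly below $S$ converges to $0$ --- which is precisely the uniqueness/attractivity theorem of Wang--Jiang that (H5) and (H6) are designed for, not a consequence of the topology of $W$. Saying that (H5) and (H6) are ``precisely where'' these facts enter is correct as a pointer, but those facts are the main theorems of \cite{Sm}, \cite{W-J-2002} and \cite{J-M-W}, each requiring substantial work. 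In short: right skeleton, but the proposal does not close into a proof without importing exactly the results the paper cites.
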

\begin{proof}
Parts (i) through (iii) are just the corresponding parts in~\cite[Thm.~0]{J-M-W}.  Part (iv) is \cite[Prop.~2.4]{J-M-W}.
\end{proof}

We let $S_I := S \cap H_I^+$, $\dot{S}_I = S \cap \dot{H}_I^+$, $\partial S_I := S \cap \partial H_I^+$ and $S^{\circ} := S \cap C^{\circ}$.  A set $S_I$ is called a {\em $k$-dimensional face\/} of $S$, $k = \card{I}-1$. The union $\partial_k S$ of all $k$-dimensional faces of $S$ is referred to as the {\em
$k$-dimensional skeleton\/} of $S$.

\smallskip
Recall that $V$ is the $3$-dimensional real vector space. $e_i$, $i \in \{1, 2, 3\}$ stands for the $i$-th vector in the standard basis of $V$.

\smallskip
Denote by $\Fix(P)$ the set of fixed points of $P$.  The fixed points $u_{1}$, $u_{2}$ and $u_{3}$ are the {\em axial\/} fixed points, and $x \in \Fix(P) \cap \dot{S}_{\{i, j\}}$ with $i \ne j$ are called {\em planar\/} fixed points.  Observe that planar fixed points need not exist (see, e.g, \cite[Ex.~8.1]{J-M-W}).

\bigskip
It is proved in~\cite[Thm.~A]{J-M-W} that the carrying simplex $S$ is a $C^1$ manifold-with-corners neatly embedded in $C$ if and only if for each ergodic invariant Borel probability measure $\mu$ supported on the boundary $\partial S$ the principal Lyapunov exponent for $\mu$ is less than any of the external Lyapunov exponents for $\mu$.  We will explain shortly what is meant under those terms.

A general definition of Lyapunov exponents (in arbitrary dimension) requires introducing concepts from ergodic theory, such as ergodic invariant measures, Oseledets decomposition, etc., which would reach far beyond the scope of the present paper.  Suffice it to say that a Lyapunov exponent is the logarithmic growth rate of a tangent vector under the derivatives of iterates of the map.

In our case, the definition of Lyapunov exponent almost trivializes.  The boundary $\partial S$ is the union of $S_{\{1, 2\}}$, $S_{\{2, 3\}}$ and $S_{\{1, 3\}}$.  Each of those sets is, by (H2), invariant, and, by Theorem~\ref{thm0}(i), homeomorphic to a compact interval.  Further, their end-points are axial fixed points, therefore $P|_{S_{\{i, j\}}}$, $i \ne j$, is conjugate to an orientation-preserving homeomorphism of a compact interval.  Consequently, ergodic invariant probability measures supported on $\partial S$ are Dirac measures on axial and planar fixed points of $P$.  The Lyapunov exponents for such measures are logarithmic growth rates of vectors in $V$ under positive iterates of the Jacobian matrices of $P$ at the fixed point, that is, the natural logarithms of the moduli of the eigenvalues of those matrices.

\smallskip
Consider first the axial fixed points $u_i$.  For definiteness, let $i = 1$.  The Jacobian matrix $DP(u_1)$ has the form
\begin{equation*}
\begin{pmatrix}
a_{11} & * & *
\\
0 & a_{22} & *
\\
0 & 0 & a_{33}
\end{pmatrix}.
\end{equation*}
As $DP(u_1)$ is non-singular, the diagonal entries are non-zero.  By (H4$'$), $0 < a_{11} < 1$.  We claim that both $a_{22}$ and $a_{33}$ are positive.  Indeed, if, for example, $a_{22} < 0$ then the image under $P$ of the interval with end-points $x$ and $x + {\epsilon} e_2$, with $\epsilon > 0$ sufficiently small, would be a $C^1$ arc tangent at $x$ to a vector whose second coordinate is negative, a contradiction to (H2).  For $\mu = \delta_{u_1}$, the principal Lyapunov exponent is $\ln{a_{11}}$, and external Lyapunov exponents are $\ln{a_{22}}$ and $\ln{a_{33}}$.  We will call $a_{11}$ the {\em principal eigenvalue at $x$\/}, and $a_{22}$ (resp.\ $a_{33}$) the {\em external eigenvalue at $x$\/} {\em corresponding to the second species\/} (resp.\ {\em corresponding to the third species\/}), cf.~\cite[Def.~2.2]{J-M-W}.  Observe that the principal eigenvalue at $u_1$ (in our sense) equals the reciprocal of the principal eigenvalue of the positive $1 \times 1$ matrix $(DP^{-1}(u_1))_{\{1\}}$ (in the sense of the Perron--Frobenius theory, see, e.g., \cite[Thm.~1.1]{Sen}).

Let $x$ be a planar fixed point.  For definiteness, assume that $x \in \dot{S}_{\{1, 2\}}$. The Jacobian matrix $DP(x)$ has the form
\begin{equation*}
\begin{pmatrix}
b_{11} & b_{12} & *
\\
b_{21} & b_{22} & *
\\
0 & 0 & b_{33}
\end{pmatrix}.
\end{equation*}
In a manner similar to that above we show that $b_{33}$ is positive.  Its natural logarithm is the external Lyapunov exponent for $\delta_{x}$.  Observe that
$\bigl(\begin{smallmatrix}
b_{11} & b_{12}
\\
b_{21} & b_{22}
\end{smallmatrix}\bigr)$
is $(DP(x))_{\{1, 2\}}$.  By (H3$'$), its inverse $(DP^{-1}(x))_{\{1, 2\}}$ has all entries positive.  By the Perron--Frobenius theorem, it has two real eigenvalues, the (positive) one, $\lambda_1$, with larger modulus corresponding to an eigenvector with both coordinates positive, and the other one, $\lambda_2$, with smaller modulus, corresponding to an eigenvector with coordinates of opposite signs.

We claim that $\lambda_2$ is positive, too.  By (H5), $\lambda_1 > 1$, so there exists a (unique) local most unstable manifold for $P^{-1}|_{H_{\{1, 2\}}}$ at $x$.  The most unstable manifold is ordered by $\ll_{\{1, 2\}}$, so it intersects $S$ only at $x$.  $S$ is a locally invariant one-dimensional (topological) manifold, so, by theorems on locally invariant manifolds (see, e.g., \cite[Thm.~5.2 and Cor.~5.4 and~5.5]{H-P-S}), it is tangent at $x$ to an eigenvector pertaining to $\lambda_2$.  If $\lambda_2$ were negative, $P|_{S_{\{1, 2\}}}$ would be orientation-reversing, a contradiction.

The internal Lyapunov exponents for $\delta_x$ are the natural logarithms of $1/\lambda_1$ and $1/\lambda_2$.  The principal Lyapunov exponent for $\delta_x$ is the natural logarithm of $1/\lambda_1$.  We will call $b_{33}$ the {\em external eigenvalue at $x$\/}, and $1/\lambda_1$ and $1/\lambda_2$ the {\em internal eigenvalues at $x$\/}. Further, we will refer to $1/\lambda_1$ as the {\em principal eigenvalue at $x$\/} (again, the principal eigenvalue at $x$ in our sense equals the reciprocal of the principal eigenvalue of the positive $2 \times 2$ matrix $(DP^{-1}(x))_{\{1, 2\}}$ in the sense of the Perron--Frobenius theory).

\section{Main Theorem}
\label{sect:main}

Recall that $R$ stands for the radial projection of the carrying simplex $S$ onto the probability simplex $\Delta$ (see Theorem~\ref{thm0}(i)).

\begin{definition}
\label{def:convex}
$S$ is {\em convex\/} if the set $\{\, {\alpha} x : \alpha \in [0, 1], \ x \in S \,\}$, that is, the global attractor $\Gamma$, is convex.
\end{definition}

\begin{theorem}
\label{thm:main}
If $S$ is convex then it is a $C^1$ submanifold-with-corners neatly embedded in $C$, diffeomorphic to $\triangle$ via radial projection.
\end{theorem}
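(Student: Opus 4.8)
The plan is to reduce the whole statement, via the characterization in \cite[Thm.~A]{J-M-W} recalled above, to a finite list of strict inequalities between eigenvalues at the boundary fixed points, and then to extract each of them from convexity. Since the only ergodic invariant measures on $\partial S$ are Dirac masses at the axial points $u_1,u_2,u_3$ and at the planar fixed points, it suffices to show that at each $u_i$ the principal eigenvalue is strictly smaller than both external eigenvalues (for $u_1$, that $a_{11}<a_{22}$ and $a_{11}<a_{33}$), and that at each planar fixed point the principal eigenvalue is strictly smaller than the external one (for $x\in\dot S_{\{1,2\}}$, that $1/\lambda_1<b_{33}$). Throughout I use the tangency dictionary from invariant-manifold theory \cite[Thm.~5.2, Cor.~5.4 and~5.5]{H-P-S}: the carrying simplex, being attracting transversally, is tangent at a fixed point to the span of the eigenvectors of the \emph{larger} eigenvalues and transverse to the eigenvector of the smallest one. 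Thus each desired inequality is equivalent to $S$ \emph{not} being tangent, at the fixed point in question, to the adjacent lower-dimensional face of $C$.

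For the axial points the argument is two-dimensional. Fix $u_1$ and work in the invariant plane $H_{\{1,2\}}$; by convexity the slice $\Gamma\cap H_{\{1,2\}}$ is convex, and its boundary consists of the axis segments $[0,u_1]$, $[0,u_2]$ and the arc $S_{\{1,2\}}$. The in-plane Jacobian at $u_1$ is upper triangular with eigenvalues $a_{11}$ and $a_{22}$. Suppose, for contradiction, that $a_{22}\le a_{11}$. Then $a_{22}$ is the smallest-modulus eigenvalue, so $S_{\{1,2\}}$ is tangent at $u_1$ to the axis direction $e_1$; since the boundary piece $[0,u_1]$ also lies along $e_1$, the arc peels off the axis tangentially, and the convex region $\Gamma\cap H_{\{1,2\}}$ presents a cusp at $u_1$. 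A convex body with nonempty interior has no cusp, a contradiction. Hence $a_{22}>a_{11}$, and symmetrically $a_{33}>a_{11}$ and the corresponding inequalities at $u_2$, $u_3$. (The sign condition $a_{12}<0$ from (H4$'$) is exactly what makes the transverse eigenvector $\ll_{\{1,2\}}$-sign-changing precisely when $a_{22}>a_{11}$, in agreement with the unorderedness of $S$.)

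The planar case is the crux, since here the comparison is between an in-plane eigenvalue $1/\lambda_1$ and the transverse eigenvalue $b_{33}$, so a genuinely three-dimensional argument is needed. Let $v_1,v_2,v_3$ be eigenvectors of $DP(x)$ for $1/\lambda_1$ (the positive Perron direction in the plane), $1/\lambda_2$ ($\ll_{\{1,2\}}$-sign-changing) and $b_{33}$; by the Perron--Frobenius discussion above $0<1/\lambda_1<1/\lambda_2$, and $S_{\{1,2\}}$ is tangent at $x$ to $v_2$. Choose the plane $Q$ through $x$ spanned by $e_3$ and the in-plane normal to the arc; the slice $\Gamma\cap Q$ is again convex. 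Suppose $b_{33}\le 1/\lambda_1$. Then $b_{33}$ is the smallest-modulus eigenvalue of $DP(x)$, its eigenvector $v_3$ is the strong-stable direction transverse to $S$, and $S$ is tangent at $x$ to $\spanned\{v_1,v_2\}=V_{\{1,2\}}$, i.e.\ to the face $H_{\{1,2\}}$. Hence in $Q$ the region $\Gamma\cap Q$ lies between a floor and a roof that are mutually tangent at $x$: a cusp, again contradicting convexity. Therefore $1/\lambda_1<b_{33}$.

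The remaining difficulty, which I expect to be the true obstacle, is the borderline case: when $a_{22}=a_{11}$ or $b_{33}=1/\lambda_1$ there is no spectral gap, the tangency dictionary degrades, and a tangential contact need not be a genuine (first-order) cusp, so convexity alone does not immediately force the strict inequality. Excluding these cases calls for a finer, second-order argument in which the ratio-monotonicity hypothesis (H6) is used to show that an equality would still produce a higher-order cusp incompatible with convexity. Once the strict inequalities hold at every axial and planar fixed point, \cite[Thm.~A]{J-M-W} yields that $S$ is a $C^1$ submanifold-with-corners neatly embedded in $C$. Finally, by the unorderedness in Theorem~\ref{thm0}(ii) every tangent plane of $S$ has a normal in $K^{\circ}$, so no ray through the origin is tangent to $S$; the radial projection $R$ is therefore a $C^1$ immersion, and being a homeomorphism onto $\triangle$ by Theorem~\ref{thm0}(i), it is a diffeomorphism.
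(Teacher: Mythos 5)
Your reduction via \cite[Thm.~A]{J-M-W} to strict eigenvalue inequalities at the axial and planar fixed points is exactly the paper's frame, and your cusp intuition is sound \emph{as intuition}; but the execution has two genuine gaps, one of which you concede yourself. The step that fails is the ``tangency dictionary''. At the point where you invoke it, $S$ is only known to be a \emph{topological} manifold-with-boundary near the fixed point $x$ --- its $C^1$ property is the very thing being proved --- and invariant-manifold theory does not say that a locally invariant topological manifold must be tangent to the span of the eigenvectors of the larger eigenvalues. Already for the linearization at a planar fixed point, the strong-stable half-plane $\{\, s w + t v_3 : s \in \RR, \ t \ge 0 \,\}$ is a locally invariant manifold-with-boundary attached along the tangent line of $S_{\{1,2\}}$ which is \emph{not} tangent to $V_{\{1,2\}}$, no matter how small $b_{33}$ is; the union of strong-stable half-leaves over $S_{\{1,2\}}$ is the nonlinear analogue. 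So from $b_{33} \le 1/\lambda_1$ you cannot conclude that $S$ is tangent to the face $H_{\{1,2\}}$; the slice $\Gamma \cap Q$ then need not exhibit any cusp, and your contradiction evaporates --- this affects the strict case, not only the borderline one. (The results \cite[Thm.~5.2, Cor.~5.4 and~5.5]{H-P-S} are used in the paper only for the boundaryless one-dimensional curve $S_{\{1,2\}}$ inside its invariant plane; they do not deliver your three-dimensional claim at a point of $\partial S$.) The same objection applies to your axial argument, since $S_{\{1,2\}}$ is a curve ending at $u_1$.

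The second gap is the one you admit: the equality cases $a_{22}=a_{11}$ and $b_{33}=1/\lambda_1$ are left open, and your guessed repair (a second-order cusp argument via (H6)) is not the route that works --- (H6) plays no role in this part of the paper. The paper's mechanism handles the strict and equality cases uniformly and replaces the tangency dictionary altogether: it works with the tangent cone $\mathcal{C}_1(x)$, which is $DP(x)$-invariant. Unorderedness and (H3$'$) (Lemmas~\ref{lm-1} and~\ref{lm-3}) force every direction of $\mathcal{C}_1(x)$ off $\mathcal{T}_x S_{\{1,2\}}$ to have a strictly negative $r$-component relative to its $e_3$-component, and convexity enters exactly once (Lemma~\ref{lm-2}, via supporting planes through $u_3$) to force the $e_3$-component to be bounded below relative to the $r$-component; together these pin $\mathcal{C}_1(x)$ into a region uniformly transverse to both $K$ and $V_{\{1,2\}}$. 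A fixed-point argument applied to $v \mapsto DP(x)v/\norm{DP(x)v}$ on the compact set $\mathcal{C}_1(x) \cap E$, where $E$ is the invariant plane of the two smallest eigenvalues, then produces an eigenvector of $DP(x)$ lying in $\mathcal{C}_1(x)$, hence outside $K \cup V_{\{1,2\}}$. If its eigenvalue equals the principal one, every vector of $E$ is an eigenvector and one finds an eigenvector on $\partial K$ with positive third coordinate, contradicting (H3$'$); if it is strictly smaller, one contradicts Perron--Frobenius for the non-negative matrix $DP^{-1}(x)$. In short: convexity is used to bound the tangent cone away from the face, and linear algebra --- not a spectral-gap tangency theorem --- converts that bound into the eigenvalue inequalities you need.
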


The reader not well-versed in differential topology can imagine a `$C^1$ submanifold-with-corners diffeomorphic to $\triangle$ via radial projection' in the following way: the inverse mapping $(R|_S)^{-1}$ can be written as
\begin{equation}
\label{eq-neat}
(R|_S)^{-1}(y) = \rho(y) y, \quad y \in \triangle,
\end{equation}
where $\rho \colon \triangle \to (0, \infty)$ is a $C^1$ function (recall that $C^1$ means that there is an open neighbourhood $\tilde{U}$ of $\triangle$ in $\{\, x \in V: \sum_{i = 1}^{3} x_i = 1 \,\}$ and a $C^1$ function $\tilde{\rho} \colon \tilde{U} \to (0, \infty)$ such that $\tilde{\rho}|_{\triangle} = \rho$).  The meaning of `neat embedding' is as follows.  For $x \in \partial S$ let $I \subset \{1, 2, 3\}$ be such that $x \in \dot{S}_I$.  Then, $S$ is neatly embedded if for any $x \in \partial S$ the tangent space $\mathcal{T}_{x} S$ of $S$ at $x$ is transverse to $V_I$, meaning that $\mathcal{T}_{x} S + V_I = V$ (informally speaking, $S$ is neatly embedded in $C$ if at any $x \in \partial S$ there is as little tangency to the corresponding face of $C$ as possible).  The standard probability simplex $\triangle$ is neatly embedded in $C$, and the existence of a $C^1$ function as in~\eqref{eq-neat} implies that $S$ is neatly embedded in $C$.

We prove Theorem~\ref{thm:main} by induction on the dimension of the skeleton.  For $0$-dimensional faces the statement is obvious.  Regarding the two-dimensional faces we have the following.
\begin{proposition}
\label{prop-dim2}
If $S$ is convex then for any $i \ne j$ the one-dimensional face $S_{\{i, j\}}$ is a one-dimensional $C^1$ submanifold-with-corners, neatly embedded in $S_{\{i, j\}}$.  Moreover, there is an invariant Whitney sum decomposition
\begin{equation*}
S_{\{i, j\}} \times V_{\{i, j\}} = \mathcal{T} S_{\{i, j\}} \oplus \mathcal{R}_{\{i, j\}}
\end{equation*}
\textup{(}$\mathcal{T} S_{\{i, j\}}$ stands for the tangent bundle of $S_{\{i, j\}}$\textup{)} with the following properties:
\begin{enumerate}
\item[\textup{(i)}]
the fiber $\mathcal{R}_{\{i, j\}}(x)$ of $\mathcal{R}_{\{i, j\}}$ over $x$ can be written as $\spanned\{r(x)\}$, where $r \colon S_{\{i, j\}} \to V_{\{i, j\}}$ is continuous, with $\norm{r(x)} = 1$ for all $x \in S_{\{i, j\}}$, and
\begin{itemize}
\item
$r(x) \in \dot{K}_{\{i, j\}}$ if $x \in \dot{S}_{\{i, j\}}$,
\item
$r(u_i) = e_{i}$ and $r(u_j) = e_{j}$;
\end{itemize}
\item[\textup{(ii)}]
the tangent space $\mathcal{T}_{x} S_{\{i, j\}}$ of $S_{\{i, j\}}$ at $x$ can be written as $\spanned\{w(x)\}$, where $w \colon S_{\{i, j\}} \to V_{\{i, j\}}$ is continuous, with $\norm{w(x)} = 1$ and $w(x) \not\in K_{\{i, j\}}$ for all $x \in S_{\{i, j\}}$;
\item[\textup{(iii)}]
at $x \in \Fix(P) \cap \dot{S}_{\{i, j\}}$, $r(x)$ is the normalized eigenvector in $\dot{K}_{\{i, j\}}$ pertaining to the principal eigenvalue, and $w(x)$ is a normalized eigenvector pertaining to the other internal eigenvalue;
\item[\textup{(iv)}]
there are $C > 0$ and $\nu > 0$ such that
\begin{equation}
\label{eq:exp-sep}
\frac{\norm{DP^n(x) r(x)}}{\norm{DP^n(x) v(x)}} \le C e^{{-\nu} n}
\end{equation}
for all $x \in S_{\{i, j\}}$ and all $n \in \NN$.
\end{enumerate}
\end{proposition}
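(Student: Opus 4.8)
The plan is to reduce everything to the invariant two-dimensional face and to exploit that there $P^{-1}$ is a strongly cooperative planar map. Fix $\{i,j\}=\{1,2\}$ and work inside $H^+_{\{1,2\}}$, which I identify with the closed first quadrant. Here $S_{\{1,2\}}$ is the carrying simplex of the restricted map, an arc joining $u_1$ to $u_2$; by Theorem~\ref{thm0}(ii) it is unordered, hence the graph of a strictly decreasing function of $x_1$, and since $\Gamma$ is convex the planar region it bounds is convex, so this graph is concave. I would first record the purely geometric consequences: the arc is Lipschitz, has one-sided tangent directions everywhere, has at most countably many corners, and meets each coordinate axis transversally at its endpoint (a strictly decreasing concave graph cannot be tangent to the axis at the point where it touches it, so the one-sided slope there is strictly negative).

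Next I construct the complementary line field $r$. By (H3$'$), for $x\in\dot{S}_{\{1,2\}}$ the matrix $(DP^{-1})_{\{1,2\}}$ is strictly positive, so $DP^{-1}$ carries $K_{\{1,2\}}$ into its interior; applying the Perron--Frobenius / Birkhoff cone-contraction argument to the linear cocycle $DP^{-1}$ over the compact arc produces a continuous $DP$-invariant line field $x\mapsto\spanned\{r(x)\}$ with $r(x)\in\dot{K}_{\{1,2\}}$ on $\dot{S}_{\{1,2\}}$, equal at a planar fixed point to the principal (positive-cone) eigenvector; this yields property~(i) on the interior and the $r$-part of~(iii). The tangent field $w$ of the arc is automatically $DP$-invariant, and because the arc is unordered one has $w(x)\notin K_{\{1,2\}}$, giving~(ii); at a planar fixed point $w(x)$ is the eigenvector for the remaining internal eigenvalue $1/\lambda_2$, completing~(iii).

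To extend $r$ continuously to the axial endpoints by $r(u_i)=e_i$, and to obtain the domination needed below, I must verify at $u_1$ the eigenvalue inequality $a_{11}<a_{22}$ (and its symmetric counterpart at $u_2$). This is where convexity is indispensable, and I expect it to be the main obstacle. At $u_1$ the direction in which the arc leaves $u_1$ is an eigendirection of the triangular matrix $(DP(u_1))_{\{1,2\}}$; by the first paragraph it has strictly negative slope, while the two eigendirections are $e_1$ (slope $0$) and the $a_{22}$-eigenvector. Using $a_{12}<0$ from (H4$'$), a short computation shows that the $a_{22}$-eigenvector has coordinates of opposite sign, hence negative slope, precisely when $a_{11}<a_{22}$; the alternative $a_{22}\le a_{11}$ would force the arc to be tangent either to $e_1$ or to an increasing eigendirection, both excluded by the geometry. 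Thus convexity delivers $a_{11}<a_{22}$, which simultaneously identifies $e_1$ as the principal (Perron) direction and supplies the spectral gap at the endpoints, where the positivity of (H3$'$) degenerates.

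With the strict gap $a_{11}<a_{22}$ at the endpoints and the strict Perron--Frobenius gap $1/\lambda_1<1/\lambda_2$ at interior fixed points, the cone contraction is uniform along the compact arc, giving constants $C,\nu>0$ with $\norm{DP^n(x)r(x)}/\norm{DP^n(x)w(x)}\le Ce^{-\nu n}$, which is the exponential separation~(iv). This domination then upgrades regularity: the dominated line field $w$ is continuous, and the one-sided geometric tangents from the first paragraph must coincide with $w(x)$ at every point (an invariant Lipschitz curve whose tangent cone is pinched by a continuous dominated direction has a genuine tangent equal to that direction), so the arc has no corners and is a $C^1$ submanifold-with-corners, the transversality at the endpoints being exactly the neat embedding. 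The resulting decomposition $S_{\{1,2\}}\times V_{\{1,2\}}=\mathcal{T}S_{\{1,2\}}\oplus\mathcal{R}_{\{1,2\}}$ is continuous and $DP$-invariant. Besides the endpoint inequality, the other delicate point is precisely this regularity bootstrap, since away from the axial points the splitting and domination are essentially automatic from the strict positivity of $(DP^{-1})_{\{1,2\}}$.
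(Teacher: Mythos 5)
Your core argument is correct and is, at bottom, the same as the paper's. The paper's proof of this proposition is very short: it invokes \cite[Thms~3.1 and~5.1]{J-M-W} to reduce everything to showing that convexity forces, at each axial fixed point, the principal eigenvalue to be smaller than the external one(s), and then says that this inequality is proved by a simplified two-dimensional version of the tangent-cone argument of Proposition~\ref{prop-dim3} (Lemmas \ref{lm-1}--\ref{lm-2} plus an invariant-direction/Perron--Frobenius analysis). Your endpoint argument is a concrete planar rendition of exactly that: concavity of the decreasing graph gives a one-sided tangent direction at $u_1$ whose slope is bounded away from $0$ by chord slopes; invariance of the (here, single-ray) tangent cone under the projectivized derivative makes that direction an eigendirection of $\bigl(\begin{smallmatrix} a_{11} & a_{12} \\ 0 & a_{22}\end{smallmatrix}\bigr)$; since $a_{12}<0$ by (H4$'$), the vertical direction is not an eigendirection, the $a_{22}$-eigendirection has slope $(a_{22}-a_{11})/a_{12}$, negative precisely when $a_{22}>a_{11}$, and $a_{22}\le a_{11}$ leaves only $e_1$ or a positive-slope eigendirection, both excluded --- hence $a_{11}<a_{22}$. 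This is the same mechanism as the paper's (convexity pins the tangent object at the fixed point outside $K\cup V_{\{i\}}$, then linear algebra), executed with one-sided derivatives of a concave function instead of the cone lemmas; your version is arguably more elementary and self-contained for the planar case.

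Where you genuinely diverge is in what comes after the eigenvalue inequality: the paper simply imports items (i)--(iv) and the $C^1$/neat-embedding conclusion from \cite[Thms~3.1 and~5.1]{J-M-W}, whereas you sketch a re-derivation (Birkhoff cone contraction for $r$, domination, Lipschitz-to-$C^1$ bootstrap). The outline is the right one, but be aware that its two load-bearing steps are precisely the content of the cited theorems and are asserted rather than proved. In particular, ``the cone contraction is uniform along the compact arc'' is not literally true: at the axial endpoints $(DP^{-1}(u_i))_{\{i,j\}}$ is triangular with a zero entry, so it maps $\partial K_{\{i,j\}}$ into $\partial K_{\{i,j\}}$ and the Hilbert-metric contraction coefficient degenerates to $1$; what actually holds there is exponential separation, obtained from the spectral gap $a_{ii}<a_{jj}$ you established (together with the automatic Perron--Frobenius gap $1/\lambda_1<1/\lambda_2$ at interior fixed points), glued into a uniform estimate by the ergodic-theoretic argument of \cite{J-M-W}, which uses that the only ergodic invariant measures on the arc are Dirac measures at fixed points. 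Likewise the bootstrap ``an invariant Lipschitz curve pinched by a continuous dominated direction is $C^1$'' requires a proof (this is essentially \cite[Thm.~5.1]{J-M-W}). So: the part of the proof the paper actually writes out, you got right and by essentially the paper's route; the part you filled in by hand is correct in strategy but, as written, restates rather than proves the imported machinery.
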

The property described in~\eqref{eq:exp-sep} is called {\em exponential separation\/}.
\begin{proof}[Indication of proof]
In the light of \cite[Thms~3.1 and~5.1]{J-M-W} it suffices to prove that the convexity of $S$ implies that at each axial fixed point $u_i$ the principal eigenvalue is smaller than the external eigenvalues.  The proof is a simplified (two-dimensional) version of the proof of Proposition~\ref{prop-dim3}, so we do not give it now.
\end{proof}

\begin{proposition}
\label{prop-dim3}
Assume that $S$ is convex.  Then at any planar fixed point $x \in \dot{S}_{\{i, j\}}$ the principal eigenvalue is smaller than both the internal eigenvalues.
\end{proposition}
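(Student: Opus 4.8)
The plan is to establish the single inequality required by the criterion of \cite[Thm.~A]{J-M-W}: at the planar fixed point, say $x\in\dot{S}_{\{1,2\}}$, the principal eigenvalue $1/\lambda_1$ is strictly smaller than the external eigenvalue $b_{33}$. Recall that $\lambda_1>1$ is the Perron eigenvalue of the positive matrix $A^{-1}:=(DP(x)^{-1})_{\{1,2\}}$, that $A:=(DP(x))_{\{1,2\}}$ has eigenvectors $v_1\in\dot{K}_{\{1,2\}}$ (for $1/\lambda_1$) and $v_2$ (for $1/\lambda_2$), and that $v_3$ (with nonzero third coordinate) is the eigenvector of $DP(x)$ for $b_{33}$. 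In the standard basis $DP(x)^{-1}$ is block upper-triangular, $\bigl(\begin{smallmatrix} A^{-1} & c\\ 0 & 1/b_{33}\end{smallmatrix}\bigr)$, and by continuity from the strictly positive interior inverses given by (H3$'$) with $I=\{1,2,3\}$, together with the last clause of (H3$'$), the off-diagonal column satisfies $c\ge 0$ with $c\neq 0$. The desired inequality $1/\lambda_1<b_{33}$ is equivalent to $\lambda_1>1/b_{33}$, i.e.\ to $v_1$ being the strictly dominant expanding direction of $DP(x)^{-1}$.

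The geometric input of convexity is that $S$ is \emph{not tangent to the face} $\{x_3=0\}$ at $x$. Indeed, the global attractor $\Gamma$ is convex and invariant, $P(\Gamma)=\Gamma$, and because $S$ is an antichain (Theorem~\ref{thm0}(ii)) it is, near $x$, the graph $x_3=\psi(x_1,x_2)$ over a planar region $B$ of a concave function $\psi$ (the upper envelope of the convex body $\Gamma$), whose bounding arc is the projection of $S_{\{1,2\}}$; here $\psi$ vanishes on that arc, through $x^\ast:=(x_1,x_2)$, and is strictly positive in the interior of $B$. For a concave $\psi$ with $\psi(x^\ast)=0$ the difference quotients into $B$ are non-increasing and positive, so the one-sided directional derivative of $\psi$ at $x^\ast$ into $B$ is strictly positive; thus $\nabla\psi(x^\ast)\neq 0$ and $S$ meets the face transversally.

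I would then read off the normal direction. Since $x$ lies in the relative interior of the edge $S_{\{1,2\}}$, the normal cone $N_x$ of $\Gamma$ at $x$ is a two-dimensional wedge, invariant under $DP(x)^{\top}$ (because $P(\Gamma)=\Gamma$). One extreme ray of $N_x$ is the face normal $-e_3$, a left eigenvector of $DP(x)$ for $b_{33}$; the other is the outward normal $n$ coming from $S$, which the antichain property places in $K$ and which (as $DP(x)^{\top}$ permutes the two extreme rays and fixes $-e_3$) is again a left eigenvector of $DP(x)$. Among the left eigenvectors only $e_3$ and the one for $1/\lambda_1$ lie in $K$ — the left eigenvector for $1/\lambda_2$ has in-plane part of mixed sign, whereas the in-plane part of the one for $1/\lambda_1$ is the positive left Perron eigenvector of $A^{-1}$ — and the case $n=e_3$ is exactly the face-tangency ruled out in the previous paragraph. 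Hence $n$ is the left eigenvector of $DP(x)$ for $1/\lambda_1$, with $(n_1,n_2)\gg 0$ and $n_3\ge 0$.

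The inequality now falls out of the third coordinate of $n^{\top}DP(x)^{-1}=\lambda_1 n^{\top}$, namely
\begin{equation*}
n_1 c_1 + n_2 c_2 = (\lambda_1 - 1/b_{33})\,n_3 .
\end{equation*}
The left-hand side is strictly positive because $(n_1,n_2)\gg 0$ and $c\ge 0$ with $c\neq 0$; in particular $n_3\neq 0$, hence $n_3>0$, and therefore $\lambda_1-1/b_{33}>0$, that is $1/\lambda_1<b_{33}$. I expect the main obstacle to be the convex-geometric middle steps: making rigorous, for a set $S$ a priori known only to be a Lipschitz convex graph, both the transversality of $S$ to the face at the edge point $x$ and the identification of the $S$-facet of the normal cone with the Perron left eigenvector $n\in K$. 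Once these are secured the concluding computation is immediate, and the same argument with the $2\times 2$ matrices replaced by scalars yields the axial case needed in Proposition~\ref{prop-dim2}.
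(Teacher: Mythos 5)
Your strategy is sound and genuinely different from the paper's, so it is worth comparing the two in some detail. The paper works with the \emph{tangent} cone $\mathcal{C}(x)$ of $S$ at the planar fixed point: Lemmas~\ref{lm-1}, \ref{lm-3} and~\ref{lm-2} bound the components $\alpha(z),\beta(z)$ of tangent directions (convexity enters only in Lemma~\ref{lm-2}, via supporting planes through $u_3$ and chords of the edge $S_{\{1,2\}}$); then, arguing by contradiction, a Brouwer-type fixed-point argument on $\mathcal{C}_1\cap E$ manufactures a (right) eigenvector of $DP(x)$ lying outside $V_{\{1,2\}}\cup K$, which is killed by Perron--Frobenius. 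You instead dualize: the normal cone $N_x$ of the convex body $\Gamma$ at the fixed point $x$ is invariant under $DP(x)^{\top}$ (this follows cleanly from $P(\Gamma)=\Gamma$, $P(x)=x$ and polarity), a linear automorphism of a salient two-dimensional wedge permutes its two extreme rays, the ray through $-e_3$ is fixed because $e_3^{\top}DP(x)=b_{33}e_3^{\top}$, hence the other extreme ray $n$ is fixed too and is a left eigenvector; identifying it as the left Perron direction and taking the third coordinate of $n^{\top}DP(x)^{-1}=\lambda_1 n^{\top}$ gives $1/\lambda_1<b_{33}$ \emph{directly}, with no contradiction argument and no case distinction on eigenvalue multiplicities (your computation survives even when $1/\lambda_1=b_{33}$ would make $DP(x)$ non-diagonalizable, which the paper must handle as a separate case). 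The use of (H3$'$) to get $c\ge 0$, $c\neq 0$ is correct and parallels the paper's use of (H3$'$) in Lemma~\ref{lm-3}; your closing remark that the scalar version handles the axial case of Proposition~\ref{prop-dim2} is also right, with (H4$'$) supplying $c>0$ there.

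The gaps are exactly where you say they are, and one citation is genuinely too weak as written: the local concave-graph structure of $S$ near $x$ does \emph{not} follow from Theorem~\ref{thm0}(ii) alone, since the $<$-antichain property is asserted only for points of $S\cap C^{\circ}$, so a priori a point of $\dot{S}_{\{1,2\}}$ and a point of $S^{\circ}$ vertically above it could coexist. This can be excluded -- apply $P^{-1}$ to such a pair $p_0<p_1$; since the open segment joining them lies in $C^{\circ}$, (H3$'$) with $I=\{1,2,3\}$ makes $P^{-1}p_1-P^{-1}p_0=\int_0^1 DP^{-1}(p_0+s(p_1-p_0))(p_1-p_0)\,ds\gg 0$, contradicting $\ll$-unorderedness of $S$ -- but it must be done. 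Likewise ``the antichain property places $n$ in $K$'' needs an argument: the shortest one is to show that the orthogonal projection of $\Gamma$ onto $\{x_3=0\}$ equals $\Gamma\cap\{x_3=0\}$, which puts the vector $(r',0)$ ($r'$ the in-plane outward normal to that face region, perpendicular to $w$ and hence $\gg_{\{1,2\}}0$) inside $N_x$; this simultaneously gives the two-dimensionality of $N_x$, the extremality of $-e_3$, and $n_3\ge 0$ for the second extreme ray. These convex-geometric facts are the true counterpart of the paper's Lemmas~\ref{lm-1}--\ref{lm-2} and are of comparable weight, so neither proof is shorter once complete; what your route buys is a direct inequality rather than a proof by contradiction, at the price of working with the global convex body $\Gamma$ rather than only with tangent directions to $S$ at $x$.
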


The idea of the proof is, in~short, to show that the convexity of $S$ entails that there exists an eigenvector of $DP(x)$ not in $V_{\{i, j\}} \cup K$.  Then it turns out that the existence of such an eigenvector implies the statement.

For $x \in S$ we define
\begin{equation*}
\mathcal{C}_1(x) := \{\, v \in V: \exists\ (x^{(n)})_{n = 1}^{\infty} \subset S \setminus \{x\}, x^{(n)} \to x, \frac{x^{(n)} - x}{\norm{x^{(n)} - x}} \to v \,\}.
\end{equation*}
$\mathcal{C}(x) : = [0, \infty) \cdot \mathcal{C}_1(x)$ is called the {\em tangent cone\/} of $S$ at $x$.  $\mathcal{C}(x)$ is a nontrivial (that is, not containing only $0$) closed subset of $V$.  Further, $DP(x) \mathcal{C}(x) = \mathcal{C}(P(x))$ for any $x \in S$.

\medskip
For definiteness, put $i = 1$, $j = 2$, and denote $I = \{1, 2\}$.

Take $x \in \Fix(P) \cap \dot{S}_I$, and write $r$ for $r(x)$, and $w$ for $w(x)$ (in other words, $r$ is the normalized eigenvector in $\dot{K}_{I}$ corresponding to the principal eigenvalue at $x$, and $w$ is a normalized eigenvector corresponding to the other internal eigenvalue at $x$).  Let  $\mathcal{C}_1$ stand for $\mathcal{C}_1(x)$.  We have that $\mathcal{C}_1 \cap \mathcal{T}_{x} S_I = \{w, - w\}$ is a proper subset of $\mathcal{C}_1$.

We decompose $z \in \mathcal{C}_1$ as
\begin{equation}
\label{decomp_z}
z = \alpha(z) e_3 - \beta(z) r + {\gamma(z)} w,
\end{equation}
with real $\alpha(z)$, $\beta(z)$ and $\gamma(z)$.  Since the third coordinates of $r$ and $w$ are zero and $\norm{z} = 1$, we have $0 \le \alpha(z) \le 1$.

\begin{lemma}
\label{lm-1}
$\beta(z) \ge 0$ for any $z \in \mathcal{C}_1$.
\end{lemma}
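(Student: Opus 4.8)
The plan is to argue by contradiction, producing from a direction $z \in \mathcal{C}_1$ with $\beta(z) < 0$ a pair of $\ll$-related points of $S$, contradicting Theorem~\ref{thm0}(ii). By~\eqref{decomp_z} the assumption $\beta(z) < 0$ says precisely that $z$ has a strictly positive component along $r \in \dot{K}_I$. The first, and crucial, step I would take is to use \emph{convexity} to clear the $w$-component of $z$. Since $S \subseteq \Gamma$ and $\Gamma$ is convex, the contingent cone $T$ of $\Gamma$ at $x$ is a closed convex cone with $\mathcal{C}_1 \subseteq T$; as $\{w,-w\} \subset \mathcal{C}_1 \subseteq T$, this cone contains the whole line $\mathbb{R}w$. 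Being a convex cone, $T$ therefore contains
\[
z' := z - \gamma(z)\,w = \alpha(z)\,e_3 + \lvert\beta(z)\rvert\,r,
\]
a direction whose first two coordinates are strictly positive (because $r \in \dot{K}_I$ and $\lvert\beta(z)\rvert > 0$) and whose third coordinate $\alpha(z)$ is nonnegative.

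Because $z'$ lies in the contingent cone of the convex set $\Gamma$ at $x$, there exist points $y_n \in \Gamma$ with $y_n \to x$ and $(y_n - x)/\norm{y_n - x} \to z'/\norm{z'}$. Hence for all large $n$ the first two coordinates of $y_n$ exceed those of $x$, that is $x \ll_I y_n$, while $(y_n)_3 \ge 0 = x_3$ since $y_n \in \Gamma \subseteq C$. I would then lift each $y_n$ back to $S$ using Theorem~\ref{thm0}(iv): write $y_n = \sigma_n s_n$ with $\sigma_n \in (0,1]$ and $s_n \in S$ (here $\sigma_n > 0$ because $y_n \to x \neq 0$). As $\sigma_n^{-1} \ge 1$ and $y_n \ge 0$ coordinatewise, we get $s_n \ge y_n$, whence $x \ll_I s_n$ and $(s_n)_3 \ge 0$.

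Passing to a subsequence, either $(s_n)_3 > 0$ for every $n$, or $(s_n)_3 = 0$ for every $n$. In the first case $x \ll s_n$ (all three coordinates strict), contradicting that no two points of $S$ are $\ll$-related. In the second case $s_n \in S_I$ and $x \ll_I s_n$, with both points on the one-dimensional face $S_I$, which contradicts the unorderedness of $S_I$ (Theorem~\ref{thm0}(ii) applied to the invariant restricted system $P|_{H_I^+}$). Either way we reach a contradiction, so no $z \in \mathcal{C}_1$ can have $\beta(z) < 0$.

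The step I expect to be the main obstacle is the very first one, namely legitimately removing the $w$-component. This is exactly where convexity is indispensable: only because $\Gamma$ is convex is its contingent cone convex, so that the line $\mathbb{R}w$ supplied by the tangency $\{w,-w\} \subset \mathcal{C}_1$ may be added to $z$ while staying among directions realized by points of $\Gamma$. Dually, one could instead fix a supporting hyperplane of $\Gamma$ at $x$; since $\pm w \in \mathcal{C}_1$ its normal is forced to be orthogonal to $w$, and the content of the argument is then that this normal pairs positively with $r$. The remaining steps (the lift via Theorem~\ref{thm0}(iv) and the two-case appeal to unorderedness) are routine once this reduction is in hand.
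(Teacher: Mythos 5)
Your proof is correct, but it follows a genuinely different route from the paper's. The paper does not use convexity directly inside this lemma: convexity enters only through Proposition~\ref{prop-dim2} (available by induction), which makes $\dot{S}_I$ a $C^1$ curve tangent to $w$ at $x$; the paper then chooses comparison points $\tilde{x}^{(n)} \in \dot{S}_I$ with $\norm{\tilde{x}^{(n)} - x} = \abs{\gamma(z)}\,\norm{x^{(n)} - x}$, so that subtracting them from the sequence $x^{(n)}$ realizing $z$ kills the $w$-component and $x^{(n)} - \tilde{x}^{(n)}$ has direction close to $\alpha(z)e_3 - \beta(z)r \in K^{\circ}$, whence $\tilde{x}^{(n)} \ll x^{(n)}$, contradicting Theorem~\ref{thm0}(ii). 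You kill the $w$-component with convexity instead (the contingent cone of the convex set $\Gamma$ is a closed convex cone containing $\mathcal{C}_1 \ni \pm w$, hence containing $z - \gamma(z)w$), and then manufacture order-related points by realizing that direction inside $\Gamma$ and lifting radially to $S$ via Theorem~\ref{thm0}(iv); all the individual steps here (contingent cone of a convex set equals the closure of the feasible directions, realization of its directions by secants, $s_n \ge y_n$ after radial lifting) are sound. Your route has two advantages: it needs only the tangency $\pm w \in \mathcal{C}_1$ at the single point $x$ rather than the $C^1$ structure of $\dot{S}_I$ on a whole neighbourhood, and your two-case ending covers the degenerate possibility $\alpha(z) = 0$, which the paper's proof passes over (the vector $\alpha(z)e_3 - \beta(z)r$ lies in $K^{\circ}$ only when $\alpha(z) > 0$; when $\alpha(z) = 0$ it lies in $\dot{K}_I \subset \partial K$). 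The price is in your Case 2: unorderedness of $S_I$ with respect to $\ll_I$ is not literally contained in Theorem~\ref{thm0}(ii), which forbids the three-dimensional relation $\ll$ and, for $<$, concerns only points of $S \cap C^{\circ}$. It holds because $S_I$ is the carrying simplex of the restricted map $P|_{H_I^+}$ (the hypotheses are formulated for all faces, so they are inherited by restriction), but identifying $S \cap H_I^+$ with that carrying simplex is an extra step --- it follows, for instance, from the fact that $\Gamma \cap H_I^+$ is the global attractor of $P|_{H_I^+}$ combined with the radial-graph property in Theorem~\ref{thm0}(i) and (iv), or by citing \cite{J-M-W}. Make that justification explicit and the argument is complete.
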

\begin{proof}
Suppose that there is $z \in \mathcal{C}_1$ such that $\beta(z)$ in~\eqref{decomp_z} is negative.  Then $\alpha(z) e_3 - \beta(z) r \in K^{\circ}$.

Assume that $\gamma(z) \ne 0$.  Since $z \in \mathcal{C}_1$, there is a sequence $x^{(n)} \in S \setminus \{x\}$ converging to $x$ and such that for each $\epsilon > 0$
\begin{equation}
\label{eq-1}
\left\lVert \frac{x^{(n)} - x}{\norm{x^{(n)} - x}} - (\alpha(z) e_3 - \beta(z) r + {\gamma(z)} w) \right\rVert < \epsilon
\end{equation}
for $n$ sufficiently large.  As $\dot{S}_I$ is a $C^1$ one-dimensional manifold, for any (sufficiently large) $n$ there exists $\tilde{x}^{(n)} \in \dot{S}_I$ such that $\norm{\tilde{x}^{(n)} - x} = \abs{\gamma(z)} \, \norm{x^{(n)} - x}$.  Further, as $\dot{S}_I$ is tangent at $x$ to $\gamma(z) w$, for each $\epsilon > 0$ there holds
\begin{equation*}
\left\lVert \frac{\tilde{x}^{(n)} - x}{\norm{\tilde{x}^{(n)} - x}} - \frac{{\gamma(z)} w}{\abs{\gamma(z)}} \right\rVert < \frac{\epsilon}{\abs{\gamma(z)}},
\end{equation*}
consequently
\begin{equation}
\label{eq-2}
\left\lVert \frac{\tilde{x}^{(n)} - x}{\norm{x^{(n)} - x}} - {\gamma(z)} w \right\rVert < \epsilon
\end{equation}
for $n$ sufficiently large.  Putting together \eqref{eq-1} and~\eqref{eq-2} we see that
\begin{equation*}
\left\lVert \frac{x^{(n)} - \tilde{x}^{(n)}}{\norm{x^{(n)} - x}} - (\alpha(z) e_3 - \beta(z) r) \right\rVert < 2 \epsilon
\end{equation*}
for $n$ sufficiently large.  Take now $\epsilon > 0$ so small that vectors within $2\epsilon$ of $\alpha(z) e - \beta(z) r$ belong to $K^{\circ}$.  Therefore, for some $n$, $\tilde{x}^{(n)} \ll x^{(n)}$, which is impossible.  The case $\gamma(z) = 0$ is considered in a similar (but simpler) way.
\end{proof}

Let $a$ stand for the eigenvalue of $DP^{-2}(x)$ pertaining to $r$, $DP^{-2}(x) r = a r$.  In other words, $a$ equals the reciprocal of the square of the principal eigenvalue at $x$.

\begin{lemma}
\label{lm-3}
$\beta(z)/\alpha(z)$ is positive and bounded away from zero, uniformly in $z \in \mathcal{C}_1 \setminus \mathcal{T}_x S_{I}$.
\end{lemma}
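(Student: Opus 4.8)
The plan is to combine the $DP^{-2}(x)$-invariance of the tangent cone with Lemma~\ref{lm-1}, using convexity to supply the one piece of nondegeneracy that the purely dynamical bookkeeping cannot give by itself. For $z$ with $\alpha(z)=0$ the ratio $\beta(z)/\alpha(z)$ is $+\infty$ and the claim is immediate from Lemma~\ref{lm-1}, so the substantive case is $\alpha(z)>0$.

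First I would record how $M:=DP^{-2}(x)$ acts in the basis $(e_3,r,w)$. Since $x$ is fixed and $DP(x)$ has the block-triangular form displayed above, one has $Mr=ar$ with $a>1$, $Mw=\lambda_2^{2}w$, and $Me_3=b_{33}^{-2}e_3+pr+qw$ for some reals $p,q$. Writing $Mz$ in the decomposition~\eqref{decomp_z} gives $\alpha(Mz)=b_{33}^{-2}\alpha(z)$ and $\beta(Mz)=a\beta(z)-p\,\alpha(z)$, so along the orbit of a $z$ with $\alpha(z)>0$ the ratio $t_n:=\beta(M^{n}z)/\alpha(M^{n}z)$ is well defined (because $\alpha(M^{n}z)=b_{33}^{-2n}\alpha(z)>0$) and satisfies the affine recursion $t_{n+1}=(a b_{33}^{2})\,t_n-p b_{33}^{2}$. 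Because $M$ maps $\mathcal{C}(x)$ onto $\mathcal{C}(x)$, every normalized iterate $M^{n}z/\norm{M^{n}z}$ lies in $\mathcal{C}_1$, so Lemma~\ref{lm-1} applies and yields $t_n\ge0$ for all $n\in\NN$. Feeding this into the recursion forces a lower bound on $t_0=\beta(z)/\alpha(z)$: if the multiplier $A:=a b_{33}^{2}$ exceeds $1$ the affine map is repelling, and an orbit that stays nonnegative must start at or above the fixed point $t_{*}=p b_{33}^{2}/(a b_{33}^{2}-1)$, whence $\beta(z)/\alpha(z)\ge t_{*}$. Since $t_{*}$ does not depend on $z$, this is exactly the uniform bound demanded by the lemma, provided $t_{*}>0$, i.e.\ provided $A>1$ and $p>0$.

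The main obstacle is therefore to establish $A>1$ and $p>0$; equivalently, to rule out the degenerate possibility that $\mathcal{C}(x)$ contains directions arbitrarily close to the horizontal plane $V_I$ (which is where $b_{33}\le 1/\lambda_1$ would place the most contracted direction and would make $S$ tangent to $V_I$ at $x$). This is precisely the step that uses convexity rather than the mere invariance of the cone. The idea is to read the carrying simplex near $x$ as the graph of a height function $f$ over the projection to $V_I$, with $f\equiv 0$ on the edge $S_I$ and $f>0$ in the interior; convexity of $\Gamma$ amounts to concavity of $f$, and a concave $f$ that vanishes on the edge and is positive inside must have a strictly positive inward directional derivative at $x$ (otherwise concavity would force $f\le0$ along an inward segment, contradicting $f>0$).

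I expect this geometric conversion of convexity into strict nondegeneracy, carried out while $S$ is not yet known to be smooth, to be the crux. The strictly positive inward slope says that the supporting half-space of $\Gamma$ at $x$ has an outward normal $\eta$ with positive third component; pairing the inequality $\langle z,\eta\rangle\le0$ (valid for all $z\in\mathcal{C}(x)\subseteq \mathcal{T}_{x}\Gamma$) with $\langle w,\eta\rangle=0$ gives directly $\beta(z)/\alpha(z)\ge \eta_3/\langle r,\eta\rangle$, matching $t_{*}$. The competitive sign structure (the negativity of the off-diagonal entries $b_{13},b_{23}$ of $DP(x)$, consistent with~(H4$'$)) is what I expect to guarantee that this normal pairs positively with $r\in\dot{K}_{I}$, so that $\langle r,\eta\rangle>0$ and the resulting constant is finite and strictly positive.
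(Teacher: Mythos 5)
Your first half---expressing $M=DP^{-2}(x)$ in the basis $(e_3,r,w)$, using the invariance $M\mathcal{C}(x)=\mathcal{C}(x)$, and applying Lemma~\ref{lm-1} to the normalized images---is exactly the paper's mechanism, but the paper stops after \emph{one} application of $M$: Lemma~\ref{lm-1} applied to $Mz/\norm{Mz}$ gives $a\beta(z)-p\,\alpha(z)\ge 0$, hence $\beta(z)/\alpha(z)\ge p/a$, which is already the uniform positive bound once $p>0$ is known. It is your decision to iterate and to read the bound off the repelling fixed point of the affine recursion that creates the need for $A=a\,b_{33}^{2}>1$, and that requirement wrecks the structure of the argument: $A>1$ is equivalent to $b_{33}>1/\lambda_{1}$, i.e.\ to the external eigenvalue exceeding the principal one, which is precisely the conclusion of Proposition~\ref{prop-dim3} that Lemma~\ref{lm-3} is designed to help prove. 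Within the paper's logic this is circular; with the one-step version it is simply not needed.

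The deeper gap is where you look for the nondegeneracy $p>0$. You flag it as the main obstacle and propose to get it from convexity; the paper gets it with no convexity at all, from (H3$'$): that hypothesis yields $DP^{-2}(x)e_{3}\gg 0$, and writing $DP^{-2}(x)e_{3}=b\,e_{3}+p\,r+q\,w$, the $V_{I}$-part $p\,r+q\,w$ must be $\gg_{I}0$; since $r\in\dot{K}_{I}$ while $w$ has components of opposite signs (Perron--Frobenius), this forces $p>0$. Convexity cannot substitute here, and your sketch in fact has the inequality backwards: supporting planes of the convex body $\Gamma$ at $x$ (normal $\tilde{p}\in K^{\circ}$, orthogonal to $w$) give $\langle z,\tilde{p}\rangle\ge 0$, i.e.\ an \emph{upper} bound on $\beta(z)/\alpha(z)$---that is exactly the paper's Lemma~\ref{lm-2}, the neat-embedding direction---whereas Lemma~\ref{lm-3} needs a \emph{lower} bound, i.e.\ exclusion of vertical tangency. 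Your concavity argument for the height function produces a positive lower bound on the inward slope (again Lemma~\ref{lm-2}'s content), not the required upper bound: a concave function vanishing on the edge may perfectly well have infinite inward slope there (profile $x_{3}=\sqrt{-x_{1}}$), and such a surface is convex, unordered and radially graphical, yet violates the lemma---so no argument using only these soft properties of $S$ can succeed; the positivity (H3$'$) is indispensable. Relatedly, the outward normal $\eta$ with $\eta_{3}>0$ and $\langle r,\eta\rangle>0$ that your final computation needs exists precisely when $e_{3}$ does \emph{not} lie in the tangent cone of $\Gamma$ at $x$ (polarity of tangent and normal cones), so that step presupposes the conclusion; and (H4$'$), which you invoke for the signs of $b_{13},b_{23}$, concerns axial fixed points only and gives no information at planar ones.
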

\begin{proof}
We write
\begin{equation*}
DP^{-2}(x) e_3 = b e_3 + c r + d w,
\end{equation*}
where $b$, $c$ and $d$ are reals.

It follows from (H3$'$) that we have $DP^{-2}(x) e_3 \gg 0$.  Consequently, $b > 0$.  As $r \in \dot{K}_I$, there holds $c > 0$.

Take $z \in \mathcal{C}_1 \setminus \mathcal{T}_x S_{I}$.  It follows from~\eqref{decomp_z} that
\begin{equation*}
DP^{-2}(x) z = {\alpha(z)} b e_3 + ({\alpha(z)} c - {\beta(z)} a) r  + {\eta(z)} w,
\end{equation*}
where $\eta(z) \in \RR$.  Applying Lemma~\ref{lm-1} to $DP^{-2}(x) z/\norm{DP^{-2}(x) z} \in \mathcal{C}_1$ we obtain
\begin{equation*}
\frac{\beta(z)}{\alpha(z)} \ge \frac{c}{a}.
\end{equation*}
\end{proof}

\begin{lemma}
\label{lm-2}
$\alpha(z)/\beta(z)$ is positive and bounded away from zero, uniformly in $z \in \mathcal{C}_1 \setminus \mathcal{T}_x S_{I}$.
\end{lemma}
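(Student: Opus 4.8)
The plan is to establish the lower bound $\alpha(z)/\beta(z) \ge \delta > 0$ (equivalently, an upper bound on $\beta(z)/\alpha(z)$) uniformly over $z \in \mathcal{C}_1 \setminus \mathcal{T}_x S_I$; geometrically this says that $S$ is transverse to the face $V_I$ at $x$, i.e.\ that the third (external) component of every tangent direction which is not tangent to the edge is genuinely present. In contrast with Lemma~\ref{lm-3}, this cannot be obtained by the same device: applying $DP^{-2}(x)$ and invoking Lemma~\ref{lm-1} controls only the coefficient of $-r$, and hence produces \emph{lower} bounds on $\beta/\alpha$. In fact any argument using the dynamics alone must be circular, since under $DP^{-2}(x)$ the vector $r$ is amplified by $\lambda_1^2 = a$, while $e_3$ is amplified (through its external eigendirection) by $1/b_{33}^2$, and which of the two dominates is governed precisely by the sign of $b_{33}-1/\lambda_1$, i.e.\ by the very inequality we are after. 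Thus the convexity hypothesis (Definition~\ref{def:convex}) must be used in an essential way.

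I would argue by contradiction. If the asserted bound fails, then by compactness of $\mathcal{C}_1$ (a closed subset of the unit sphere) there is a sequence $z^{(k)} \in \mathcal{C}_1 \setminus \mathcal{T}_x S_I$ with $\alpha(z^{(k)})/\beta(z^{(k)}) \to 0$; the coefficients being bounded and $\beta(z^{(k)}) \ge 0$ by Lemma~\ref{lm-1}, a subsequence converges to some $z^* \in \mathcal{C}_1$ with $\alpha(z^*)=0$, i.e.\ $z^* \in V_I$. The delicate point is the possibly degenerate case $z^* = \pm w$. Here I would exploit that $V_I$ is $DP^{-2}(x)$-invariant and that, on $V_I$, the principal eigenvector $r$ dominates $w$ (because $\lambda_1 > \lambda_2$): applying a suitable number of backward iterates $DP^{-2}(x)$ pushes the $z^{(k)}$ away from the repelling direction $w$ while keeping them in $\mathcal{C}_1$. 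Passing to the limit and using that $\mathcal{C}_1$ is closed and $DP^{-2}(x)$-invariant, one is reduced to the presence in $\mathcal{C}_1$ of a vector of $V_I \setminus \mathcal{T}_x S_I$, and then, by the same dominance of $r$, to $-r \in \mathcal{C}_1$ (the vector $+r$ being excluded by Lemma~\ref{lm-1}, as it has a positive $r$-component).

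It then remains to contradict $-r \in \mathcal{C}_1$. By definition there are $y_n \in S \setminus \{x\}$, $y_n \to x$, with $(y_n - x)/\norm{y_n-x} \to -r$, where $r \in \dot{K}_I$; in particular the first two coordinates of $y_n$ are eventually strictly smaller than those of $x$, while $0 \le (y_n)_3 = o(\norm{y_n-x})$ (the third coordinate cannot vanish along a subsequence, for then $y_n$ would lie on the $C^1$ edge $S_I$ of Proposition~\ref{prop-dim2}, whose secants tend to $\pm w \neq -r$). At this point convexity is used to guarantee that $S$ genuinely rises from its edge: the radial function $\rho$ of~\eqref{eq-neat} is the reciprocal of the convex gauge of $\Gamma$, so it admits one-sided directional derivatives at $R(x)$; moving off the edge of $\triangle$ in a direction with positive third component then yields a tangent vector $z_0 \in \mathcal{C}_1$ whose third component equals $\rho(R(x))\,\tau_3 > 0$, so that $\alpha(z_0) > 0$ (and $\beta(z_0) > 0$ by Lemma~\ref{lm-3}). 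Choosing $s \in S$ close to $x$ in the direction $z_0$ at a scale $\tau$ with $(y_n)_3 \ll \tau \ll \norm{y_n-x}$, a direct comparison of coordinates gives $y_n \ll s$: the third coordinate of $s$ is of order $\tau\,\alpha(z_0)$ and dominates $(y_n)_3$, while in the first two coordinates the displacement of $s$ is of order $\tau$ and is dominated by the displacement of $y_n$ in the $-r$ direction. This contradicts Theorem~\ref{thm0}(ii), which forbids two points of $S$ related by $\ll$. Hence $-r \notin \mathcal{C}_1$, the bound holds, and its uniformity follows from the compactness used above.

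The step I expect to be the main obstacle is exactly the use of convexity in the third paragraph: turning the convexity of $\Gamma$ into the quantitative statement that $S$ leaves the face with a tangent direction having a definite third component (equivalently, that the locally Lipschitz radial function is not tangent to the edge to infinite order), all the more so because $\rho$ need not be differentiable at the edge point $x$. Controlling the degenerate $\pm w$ limit in the second paragraph through the backward dynamics is a secondary technical difficulty. It is precisely at this transversality step that the convexity hypothesis is indispensable, since without it the carrying simplex may fail to be neatly embedded.
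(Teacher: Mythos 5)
Your strategy is genuinely different from the paper's, and it contains a real gap at exactly the step you dismiss as a ``secondary technical difficulty'': the degenerate case $z^{(k)} \to \pm w$. Write $A = DP^{-2}(x)$, $Ar = ar$, $Aw = \mu w$ (with $a = \lambda_1^2 > \mu = \lambda_2^2$), and $Ae_3 = b\,e_3 + c\,r + d\,w$. One application of $A$ multiplies the ratio $\alpha/\beta$ by roughly $b/a$ and the ratio $\abs{\gamma}/\beta$ by roughly $\mu/a$. So, to turn your sequence $z^{(k)} = \alpha_k e_3 - \beta_k r + \gamma_k w$ (with $\alpha_k/\beta_k \to 0$, $\beta_k \to 0$, $\abs{\gamma_k} \to 1$) into a limit vector lying in $V_I \setminus \mathcal{T}_x S_I$, you must apply at least $m_k \approx \log(\abs{\gamma_k}/\beta_k)/\log(a/\mu)$ iterates to tame the $w$-component, while, when $b > a$, you may apply at most about $\log(\beta_k/\alpha_k)/\log(b/a)$ iterates before $\alpha/\beta$ stops being small. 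Nothing ties these two quantities together: the sequence $\alpha_k = e^{-k}/k$, $\beta_k = e^{-k}$, $\abs{\gamma_k} \to 1$ is consistent with everything established up to this point (Lemmas \ref{lm-1} and \ref{lm-3}), and for it no choice of $m_k$ works. You cannot rule out $b > a$ here, because $b \le a$ is equivalent to the external eigenvalue being at least the principal one --- precisely the inequality Proposition \ref{prop-dim3} is trying to prove; your degenerate-case argument thus falls into the very circularity you correctly warned about in your first paragraph. Note also that the uniform bound genuinely requires controlling this regime: a hypothetical $\mathcal{C}_1$ containing vectors with $\alpha \sim \beta^2$, $\beta \to 0$ has infimum $\alpha/\beta = 0$ yet contains no vector with $\alpha = 0 < \beta$, so ruling out ``forbidden'' limit vectors and $-r$ is strictly weaker than the statement of the lemma.

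What your argument does prove (your third paragraph, which is correct modulo one caveat, plus the forbidden-vector reduction, which is fine since there $\alpha = 0$ exactly and no $e_3$-growth occurs) is positivity: $-r \notin \mathcal{C}_1$ and hence $\alpha(z) > 0$ on $\mathcal{C}_1 \setminus \mathcal{T}_x S_I$. The paper gets uniformity with no case analysis at all, by using convexity as a supporting-plane inequality rather than through a single tangent vector $z_0$: the part of $S$ projecting onto $\conv\{(0,0,1), y^{(1)}, y^{(2)}\}$ lies, by convexity of $\Gamma$, on the far side of the plane through $u_3$, $(R|_S)^{-1}(y^{(1)})$, $(R|_S)^{-1}(y^{(2)})$; shrinking the chord $[y^{(1)}, y^{(2)}]$ of the edge to $R(x)$ yields a plane $\tilde{L}$ containing $x + \mathcal{T}_x S_I$ and $u_3$, with normal $\tilde{p} \in K^{\circ}$ satisfying $\langle w, \tilde{p}\rangle = 0$, and then $\langle z, \tilde{p}\rangle \ge 0$ for \emph{every} $z \in \mathcal{C}_1$ gives $\alpha(z)/\beta(z) \ge \langle r, \tilde{p}\rangle/\langle e_3, \tilde{p}\rangle$ uniformly --- the inequality is linear, so it degenerates harmlessly to $0 \ge 0$ at $\pm w$. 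To repair your proof you would need convexity to enter in this global, linear form on all of $\mathcal{C}_1$, not only through $z_0$; that would also remove the remaining caveat in your third paragraph (the one-sided directional derivative of the gauge of $\Gamma$ at an edge point could a priori be infinite, which must be excluded, e.g.\ via Lemma \ref{lm-1}, before you can extract $z_0$ from it).
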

\begin{proof}
Take an interval, $B$, with end-points $y^{(1)}$, $y^{(2)}$, contained in $\dot{\triangle}_{I}$, such that $R(x) = \tfrac{1}{2} y^{(1)} + \tfrac{1}{2} y^{(2)}$.  Consider the plane $L$ passing through $u_3 = (R|_{S})^{-1}(0, 0, 1)$, $x^{(1)} = (R|_{S})^{-1} (y^{(1)})$, $x^{(2)} = (R|_{S})^{-1} (y^{(2)})$.  A vector $p$ normal to $L$ can be chosen to have all coordinates positive, so the intersection $L \cap C$ divides $C$ into two sets, a bounded one, $L^{-}$, containing the origin, and an unbounded one, $L^{+}$.  By the convexity assumption, the image $(R|_{S})^{-1}(\conv\{(0,0,1), y^{(1)}, y^{(2)}\})$ is contained in $L \cup L^{+}$.  Observe that the image of $\{\, t_{0}(0, 0, 1) + t_{1}y^{(1)} + t_{2}y^{(2)}: t_0 + t_1 + t_{2} = 1, \ t_0 \ge 0, \ t_1, t_{2} > 0 \,\}$ under $(R|_{S})^{-1}$ is a neighbourhood of $x$ in the relative topology of $S$.

The above construction can be repeated when we replace $B$ by its image under the homothety with centre $R(x)$ and ratio $\epsilon \in (0, 1]$.  Let $\epsilon \to 0^{+}$.  Then the planes $L$ converge to the plane $\tilde{L}$ containing $x + \mathcal{T}_{x} S_{I}$ and passing through $u_3$.  Any $z \in \mathcal{C}_1$, considered a bound vector with initial point at $x$, has its terminal point in $\tilde{L} \cup \tilde{L}^{+}$.

A normal vector $\tilde{p}$ to $\tilde{L}$ can be chosen to belong to $K^{\circ}$.  From the previous paragraph it follows that for any $z \in \mathcal{C}_1$ there holds $\langle z, \tilde{p} \rangle \ge 0$, consequently, taking~\eqref{decomp_z} into account we obtain
\begin{equation*}
\alpha(z) \langle e_3, \tilde{p} \rangle - \beta(z) \langle r, \tilde{p} \rangle + \gamma(z) \langle w, \tilde{p} \rangle \ge 0.
\end{equation*}
As $\langle e_3, \tilde{p} \rangle > 0$, $\langle r, \tilde{p} \rangle > 0$ and $\langle w, \tilde{p} \rangle = 0$, we have that
\begin{equation*}
\frac{\alpha(z)}{\beta(z)} \ge \frac{\langle r, \tilde{p} \rangle}{\langle e_3, \tilde{p} \rangle}.
\end{equation*}
\end{proof}
It follows from Lemma~\ref{lm-3} and the proof of Lemma~\ref{lm-2} that $z \in \mathcal{C}_1$ belongs to $\mathcal{T}_x S_{I}$ (that is, $z = \pm w$) if and only if $\alpha(z) = \beta(z) = 0$.  We will not use that property in the sequel.

\begin{proof}[Proof of Proposition~\ref{prop-dim3}]

By Proposition~\ref{prop-dim2}, the principal eigenvalue at $x$ is smaller than the other internal eigenvalue.  It suffices then to show that the principal eigenvalue is smaller than the external eigenvalue.

Suppose to the contrary that the external eigenvalue is less than or equal to the principal eigenvalue.  Let $E$ stand for the two-dimensional $DP(x)$-invariant subspace of $V$ corresponding to the least eigenvalue (when the external eigenvalue equals the principal eigenvalue) or to the least and second least eigenvalues (when the external eigenvalue is smaller than the principal eigenvalue).  We have $V = E \oplus \spanned\{w\}$.

We claim that the set of vectors $z \in \mathcal{C}_1 \cap E$ is nonempty.  To prove the claim, notice that $E$ contains a vector of the form $e_3 + {\gamma}_1 w + {\gamma}_2 r$.  Then $\{\tilde{e}, r\}$, where $\tilde{e} = e_3 + {\gamma}_1 w$, is a basis of $E$.
Take the set $R := x + [-{\epsilon} r, {\epsilon} r] + [0, \eta \tilde{e}]$, where $\epsilon > 0$ and $\eta > 0$ are small.  Notice that  $x - {\epsilon} r \in \Gamma$ and $x + {\epsilon} r \in C \setminus \Gamma$, hence, by taking $\eta > 0$ smaller if necessary, we have $x - {\epsilon} r + {\delta} \tilde{e} \in \Gamma$ and $x + {\epsilon} r + {\delta} \tilde{e} \in C \setminus \Gamma$ for all $\delta \in [0, \eta]$.  Therefore for each $\delta \in [0, \eta]$ there is $x(\delta) \in S \cap (x + {\delta} \tilde{e} + [-{\epsilon} r, {\epsilon} r])$.  Since no two points in $S \cap C^{\circ}$ can be in the $<$ relation (Theorem~\ref{thm0}(ii)), such $x(\delta)$ is unique.  As a consequence, the set $S \cap R$ can be written as
\begin{equation*}
\{\, x + {\delta} \tilde{e} - j(\delta) r : \delta \in [0, \eta] \,\},
\end{equation*}
where $j \colon [0, \eta] \to \RR$ with $j(0) = 0$.  Since $j$ is the coordinate of the inverse of the restriction to the compact set $S \cap R$ of the affine projection on $x + \spanned\{\tilde{e}\}$ along $r$, the function $j$ is continuous.  The vectors in $\mathcal{C}_1 \cap E$ correspond to limits of right-hand difference quotients of $j$ at zero.  Lemma~\ref{lm-2} implies that those limits are bounded from above.  The claim thus follows.

Further, by Lemma~\ref{lm-3}, the limits of the right-hand difference quotients of $j$ at zero are positive.  Since $j$ is continuous, it follows that $\mathcal{C}_1 \cap E$ is homeomorphic to a compact interval or to a singleton.  As the mapping defined on $\mathcal{C}_1$ by
\begin{equation*}
v \mapsto \frac{DP(x) v}{\norm{DP(x) v}}, \quad v \in \mathcal{C}_1,
\end{equation*}
is continuous and takes $\mathcal{C}_1 \cap E$ into itself, it has a fixed point, which corresponds to an eigenvector of $DP(x)$ contained in $\mathcal{C}_1 \cap E$.  Denote such a eigenvector by $z$.

\smallskip
If the eigenvalue corresponding to $z$ equals the principal eigenvalue at $x$, each nonzero vector in $E$ is an eigenvector.   Observe that for any $\epsilon \in \RR$ the subspace $E$ contains the vector $r + {\epsilon} \tilde{e} = r + {\epsilon} e_3 + {\epsilon} {\gamma_1} w$.  For $\epsilon > 0$ sufficiently small, such a vector, call it $\tilde{v}$, belongs to $K^{\circ}$.  On the other hand, $z$ has third coordinate positive and does not belong to $K$.  Hence the interval joining $z$ to $\tilde{v}$ intersects $\partial K$ at some $\hat{v}$ with third coordinate positive.  By (H3$'$), $DP^{-2}(x) \hat{v} \in K^{\circ}$, so $\hat{v}$ is not an eigenvector.

\smallskip
There remains the case that the eigenvalue corresponding to $z$ is smaller than the principal eigenvalue at $x$.  Then it is the smallest eigenvalue of $DP(x)$, of algebraic multiplicity one.  Its reciprocal is the largest eigenvalue of the matrix $DP^{-1}(x)$ with non-negative entries.  But this eigenvalue corresponds to an eigenvector outside $K$, which is in contradiction to the Perron--Frobenius theorem (see, e.g., \cite[Thm.~2.1.1]{B-P})
\end{proof}

Since for each ergodic invariant measure supported on $\partial S$ its principal Lyapunov exponent is smaller than its external Lyapunov exponent(s), an application of~\cite[Thm.~A]{J-M-W} concludes the proof of~Theorem \ref{thm:main}.

\section{Concluding remarks}
\label{sect:conclusions}
Analysis of the proofs above shows that it suffices to assume that the carrying simplex is convex near its boundary $\partial S$.

\medskip
One can say that at $x \in \dot{S}_{I}$ with nonempty $I \ne \{1, 2, 3\}$ the carrying simplex $S$ {\em is not tangent to $\partial C$\/} if there holds $(\mathcal{C}(x) - \mathcal{C}(x)) + V_{I} = V$.  Observe that we have proved in~fact the equivalence of the following properties:
\begin{itemize}
\item
$S$ is not tangent to $\partial C$ at any $x \in \Fix(P) \cap \partial S$.
\item
$S$ is a $C^1$ submanifold-with-corners neatly embedded in $C$.
\end{itemize}

\section*{Acknowledgements}
The author is grateful to Stephen Baigent for calling his attention to some bibliographical items, and to the anonymous referees for their remarks.

\section*{Funding}
This work was supported by project 0401/0228/16.

\end{document}